\documentclass[11pt,dvipdfmx,a4paper]{article}
\usepackage[margin=30mm]{geometry}
\usepackage[colorlinks=true,linkcolor=blue,citecolor=blue]{hyperref}
\usepackage{amssymb,amsthm,amsmath, amsfonts,ascmac, enumerate}
\numberwithin{equation}{section}

\theoremstyle{definition}
\newtheorem{thm}{Theorem}[section]
\newtheorem{defn}[thm]{Definition}
\newtheorem{lem}[thm]{Lemma}
\newtheorem{prop}[thm]{Proposition}
\newtheorem{rem}[thm]{Remark}

\newcommand{\R}{\mathbb{R}}   
   
\newcommand{\N}{\mathbb{N}}

\newcommand{\calH}{{\mathcal{H}}}

\newcommand{\calV}{{\mathcal{V}}}

\newcommand{\calX}{{\mathcal{X}}}

\title{On the rate of convergence to the Boolean extreme value distribution under the von Mises condition}

\author{Yuki Ueda}
\date{}

\begin{document}

\maketitle

\begin{abstract}
We investigate the rate of convergence toward the Boolean extreme value distribution, which is the universal limiting law for the normalized spectral maximum of Boolean independent and identically distributed positive operators, under the von Mises condition.
\end{abstract}


\section{Introduction}

Extreme value theory has long served as a foundational framework for describing the asymptotic behavior of the maximum of a sequence of independent and identically distributed (i.i.d.) random variables. Over time, it has evolved into a major branch of probability and statistics, playing a central role particularly in the study of large observations and rare events. One of the most important results in extreme value theory is Fisher-Tippett-Gnedenko theorem (\cite{FT28}, \cite{G43}, \cite{R}), which states that, under appropriate normalization, the distribution of the maximum of i.i.d. random variables converges to one of the {\it extreme value distributions}:
$$
\Phi_\alpha(x)=\begin{cases}
\exp(-x^{-\alpha})\mathbf{1}_{(0,\infty)}(x), & \alpha>0 \quad \text{(Fr\'{e}chet distribution)};\\
\exp(-(-x)^{-\alpha})\mathbf{1}_{(-\infty,0)}(x) + \mathbf{1}_{[0,\infty)}(x), & \alpha<0 \quad \text{(Weibull distribution)};\\
\exp(-e^{-x})\mathbf{1}_\R(x), & \alpha=0 \quad \text{(Gumbel distribution)}.
\end{cases}
$$

In 2021, Vargas and Voiculescu (\cite{VV21}) initiated extreme value theory in the context of Boolean independence for non-commutative random variables (see \cite{SW97} for details on Boolean probability theory). Let $(\calH,\xi)$ be a Hilbert space equipped with a unit vector $\xi \in \calH$, and define the vector state $\varphi(X):=\langle T\xi, \xi\rangle_{\calH}$ for a (bounded) linear operator $T$ on $\calH$. If $P$ and $Q$ are Boolean independent projections on $\calH$ with $\varphi(P)=1-p$ and $\varphi(Q)=1-q$ for some $p,q \in [0,1]$, then they showed that $\varphi(P\lor Q)=1-r$ where $r\in [0,1]$ satisfies
$$
r^{-1}-1=(p^{-1}-1) + (q^{-1}+1).
$$
If either $p=0$ or $q=0$, then $r$ is defined to be zero. This identity motivates the definition of a semigroup structure $([0,1], \cup \hspace{-.65em}\lor)$ with the binary operation:
$$
(x\cup \hspace{-.89em}\lor y)^{-1} -1 := (x^{-1}-1) + (y^{-1}-1), \quad x,y \in (0,1]
$$
and $x\cup \hspace{-.89em}\lor y :=0$ if either $x=0$ or $y=0$.

Due to certain constraints in the Boolean setting and operator theory, one must restrict to the class of distribution functions supported on $[0,\infty)$ (see Section 2 in \cite{VV21} for details). Let $\Delta_+$ be the set of all distribution functions on $[0,\infty)$. For $F,G\in \Delta_+$, the {\it Boolean max-convolution} is defined as
$$
(F \cup\hspace{-.89em}\lor G)(x):= F(x) \cup\hspace{-.89em}\lor G(x), \quad x \ge0,
$$
with the $n$-fold Boolean max-convolution defined recursively as
$$
F^{\cup \hspace{-.55em}\lor n}:= \underbrace{F\cup\hspace{-.89em}\lor \cdots \cup\hspace{-.89em}\lor F}_{n \text{ times}}, \qquad n\in \N.
$$

One of the most important results in this theory is the discovery of a semigroup isomorphism between $([0,1],\cup\hspace{-.65em}\lor)$ and $([0,1],\cdot)$, given by the map
$$ 
\calX(u):=\exp\left(1-\frac{1}{u}\right), \quad u \in (0,1], \quad \text{and} \quad \calX(0):=0
$$
which satisfies
$$
\calX(u \cup\hspace{-.89em}\lor v) =\calX(u)\calX(v), \qquad u,v\in [0,1].
$$
The inverse map $\calX^{\langle-1\rangle}$ is given by
$$
\calX^{\langle -1\rangle} (u):= \frac{1}{1-\log u}, \quad u\in (0,1], \quad \text{and} \quad \calX^{\langle -1\rangle}(0):=0
$$
and satisfies
$$
\calX^{\langle -1\rangle} (uv) = \calX^{\langle -1\rangle} (u) \cup \hspace{-.89em}\lor \calX^{\langle -1\rangle} (v), \qquad u,v\in [0,1].
$$
For $F\in \Delta_+$, we define the following distribution functions:
$$
(\calX(F))(x):=\calX(F(x)) \quad \text{and} \quad   (\calX^{\langle-1\rangle}(F))(x):=\calX^{\langle-1\rangle}(F(x)), \quad x\ge0.
$$

The mapping $\calX$ facilitates the transfer of classical results into the Boolean setting. Using this framework, Vargas and Voiculescu proved that the distribution function of the normalized spectral maximum $(X_1\lor \cdots \lor X_n)/a_n$, given by
$$
x\mapsto F^{\cup\hspace{-.55em}\lor n}(a_nx), \qquad x\in [0,\infty)
$$
converges weakly to the {\it Boolean extreme value distribution}, also known as {\it Dagum distribution}:
$$
\Phi_\alpha^{\rm boolean}(x) = \frac{1}{1+x^{-\alpha}}, \quad x>0 \quad \text{and} \quad \alpha>0,
$$
as $n\to\infty$ (see Theorem 4.1 in \cite{VV21}).
Here, $\{X_n\}_n$ is a sequence of Boolean i.i.d. positive random variables with spectral distribution function $F \in \Delta_+$, $\{a_n\}_n \subset (0,\infty)$ is a normalization sequence, and $\lor$ denotes the Ando max-operation (see \cite{A89}, \cite{BV06}, \cite{O71}). The following important relations are known:
$$
\calX(\Phi_\alpha^{\rm boolean}) = \Phi_\alpha  \quad \text{and} \quad \calX^{\langle -1\rangle} (\Phi_\alpha)=\Phi_\alpha^{\rm boolean}, \qquad \alpha>0.
$$

The purpose of this paper is to determine the convergence rate of normalized spectral maximum of Boolean i.i.d. positive random variables to the Boolean extreme value distribution, under the von Mises condition. For $\alpha>0$ and for a distribution function $F$ on $\R$ which is differentiable on a neighborhood of $\infty$, we define
\begin{align}\label{eq:F_k}
k_{\alpha,F}(x)= \frac{xF'(x)}{F(x)(1-F(x))}-\alpha.
\end{align}
In this paper, we focus on the following class of distribution functions.
\begin{defn}\label{def:F_alpha}
Let us consider $\alpha>0$. Denote by $\calV_\alpha$ the set of all distribution functions $F$ on $\R$ such that
\begin{enumerate}[\rm (1)]
\item $F(x)<1$ for all $x\in (0,\infty)$, $F(0)=0$ and $F$ is a $C^1$-function on a neighborhood of $\infty$; \label{condi1}
\item there exists a non-increasing continuous function $g$ such that $g(x) \to 0$ as $x\to\infty$ and 
\begin{align}\label{eq:bound}
|k_{\alpha,F}(x)|\le g(x)
\end{align}
for all $x$ in the neighborhood of $\infty$.  \label{condi2}
\end{enumerate}
In this case, we call the function $g$ the {\it auxiliary function with \eqref{eq:bound}} and the above assumption \eqref{condi2} the {\it von Mises condition}. Moreover, we define $\calV_{\alpha,+}:=\calV_\alpha \cap \Delta_+$.
\end{defn}

\begin{thm}\label{thm:main}
Given $\alpha>0$, let us consider $F\in \calV_{\alpha,+}$ and let $g$ be an auxiliary function with \eqref{eq:bound}. Define two positive sequences $a_n>0$ and $a_n'>0$ by $F(a_n)=e^{-\frac{1}{n}}$ and $F(a_n')=\frac{n}{n+1}$, respectively, and define $A_n:=a_n/a_n'\in (0,1]$. Then we obtain
\begin{align*}
\sup_{x \in \R} |F^{\cup \hspace{-.55em}\lor n}(a_n x)- \Phi_\alpha^{\rm boolean} (x)| \le O \left(g(\rho(a_n)) \lor n^{-1} \lor (A_n^{-1}-1) \right),
\end{align*}
for sufficiently large $n$, where $\rho$ is the inverse function of
$$
t\mapsto t\left\{ \frac{\alpha e}{g(t)} - (e+1)\right\}^{\frac{1}{\alpha -g(t)}}.
$$
\end{thm}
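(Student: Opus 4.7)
My plan is to derive a single clean algebraic closed form for $F^{\cup\hspace{-.50em}\lor n}(a_n x)$ that makes the comparison with $\Phi_\alpha^{\text{boolean}}(x)$ transparent, and then bound the resulting error uniformly via a region splitting in $x$. I will start from the $n$-fold Boolean max-convolution identity $F^{\cup\hspace{-.50em}\lor n}(y) = F(y)/(n-(n-1)F(y))$, which follows directly from the definition. Integrating the von Mises relation $t \cdot (\log(F/(1-F)))'(t) = \alpha + k_{\alpha,F}(t)$ from $a_n'$ (where $F(a_n')/(1-F(a_n')) = n$) to $y = a_n x$ yields
\begin{align*}
\frac{F(a_nx)}{1-F(a_nx)} = n(A_nx)^\alpha\exp(I_n(x)), \qquad I_n(x) := \int_{a_n'}^{a_nx}\frac{k_{\alpha,F}(t)}{t}\,dt.
\end{align*}
Substituting this back into the Boolean formula will give the compact representation $F^{\cup\hspace{-.50em}\lor n}(a_n x) = \tilde x/(1+\tilde x)$ with $\tilde x := (A_n x)^\alpha e^{I_n(x)}$. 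Since $\Phi_\alpha^{\text{boolean}}(x) = x^\alpha/(1+x^\alpha)$, this leads to the explicit error identity
\begin{align*}
F^{\cup\hspace{-.50em}\lor n}(a_nx) - \Phi_\alpha^{\text{boolean}}(x) = \frac{x^\alpha(A_n^\alpha e^{I_n(x)} - 1)}{(1+\tilde x)(1+x^\alpha)}.
\end{align*}

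To control the numerator I will use two complementary decompositions. Writing $\beta_n := A_n^\alpha e^{I_n(1)} = 1/(n(e^{1/n}-1)) = 1 - 1/(2n) + O(n^{-2})$ and $J_n(x) := \int_{a_n}^{a_n x} k_{\alpha, F}(t)/t\,dt$, one has $A_n^\alpha e^{I_n(x)} - 1 = (\beta_n-1)e^{J_n(x)} + (e^{J_n(x)}-1)$; the first term is the source of the unavoidable $n^{-1}$ defect coming from the Boolean normalization $F(a_n) = e^{-1/n}$, while the second is bounded by $|J_n(x)|e^{|J_n(x)|}$ with $|J_n(x)| \le g(\min(a_n,a_nx))\,|\log x|$ by $|k_{\alpha,F}| \le g$ and monotonicity of $g$. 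The alternative split $A_n^\alpha e^{I_n(x)} - 1 = (A_n^\alpha - 1)e^{I_n(x)} + (e^{I_n(x)} - 1)$ combined with the elementary inequality $|1-A_n^\alpha| \le \alpha(A_n^{-1} - 1)$ surfaces the $A_n^{-1} - 1$ contribution. Together, these two splittings produce all three terms in the theorem's bound: $n^{-1}$ from the normalization mismatch, $A_n^{-1} - 1$ from the scale ratio, and $g$ from the integrated von Mises deviation.

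The principal difficulty, and the reason for the somewhat exotic function $\rho$ in the statement, is upgrading the pointwise estimate to a supremum over $x > 0$, since $|J_n(x)|$ grows like $|\log x|$ at both extremes. I will split the range of $x$ into three regions. For very large $x$, the denominator $(1+\tilde x)(1+x^\alpha) \gtrsim x^{2\alpha}$ in the error identity beats the log-growth of the numerator and the contribution decays; for moderate $x$ the closed form applies directly with bounded logarithmic factor; for very small $x$, both $F^{\cup\hspace{-.50em}\lor n}(a_n x)$ and $\Phi_\alpha^{\text{boolean}}(x)$ are individually tiny and their difference is bounded by their sum. The function $\rho$ will arise from optimally placing the threshold between the small- and large-$x$ regimes so that the resulting tail bounds balance; equating them, after manipulation with the non-increasing $g$, leads to the equation $\rho(a_n)\{\alpha e/g(\rho(a_n)) - (e+1)\}^{1/(\alpha - g(\rho(a_n)))} = a_n$ that defines $\rho$, with common value $g(\rho(a_n))$. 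This balancing step, and verifying that the optimization produces exactly the form $\{\alpha e/g(t) - (e+1)\}^{1/(\alpha - g(t))}$ in which $g$ enters both multiplicatively and in an exponent, is the technical heart of the proof; the closed-form derivation and the pointwise decompositions are routine.
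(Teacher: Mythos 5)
Your proposal is correct in substance but follows a genuinely different route from the paper. The paper splits the line into $x\ge 1$ and $0<x<1$: on $x\ge 1$ it compares the Boolean power with the free max-convolution power and invokes the free-case rate from \cite{KU25} (Lemma \ref{lem:Estimate:freemax}, Proposition \ref{prop:Estimate:booleanmax}), while on $0<x<1$ it pushes Resnick's classical sandwich \cite[Eq.\ (2.45)]{R} through the isomorphism $\calX$ at the scaling $a_n'$ (Proposition \ref{prop:Estimate:booleanmax2}) and then converts $a_n'\to a_n$, which is what creates the $\alpha(A_n^{-1}-1)$ term. You instead integrate $t\,(\log(F/(1-F)))'(t)=\alpha+k_{\alpha,F}(t)$ to get the exact representation $F^{\cup \hspace{-.50em}\lor n}(a_nx)=\tilde x/(1+\tilde x)$ with $\tilde x=(A_nx)^\alpha e^{I_n(x)}$ (the $n$ cancels exactly, which is the Boolean-specific simplification), and work from a single error identity; this is essentially a self-contained re-derivation of the Resnick-type $x^{\pm g}$ bounds rather than an import via $\calX$, and it bypasses the free-probability detour entirely. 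A nice by-product is that your first splitting computes $\beta_n=A_n^\alpha e^{I_n(1)}=1/(n(e^{1/n}-1))=1-\tfrac{1}{2n}+O(n^{-2})$ exactly, so the scale discrepancy is absorbed into the $O(n^{-1})$ term and the $A_n^{-1}-1$ term never actually arises (your second splitting is redundant); you would in fact obtain the slightly sharper bound $O(g(\rho(a_n))\lor n^{-1})$, which of course still implies the stated theorem. Your balancing equation for the threshold and the resulting definition of $\rho$ coincide with the paper's (equation \eqref{eq:g=b} and \eqref{eq:rhoinversedef}), with common value of order $g(\rho(a_n))/(e(\alpha-g(\rho(a_n))))$. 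What remains to be written out, as in the paper, is the verification that $\rho(t)\to\infty$ and $\rho(t)/t\to 0$ (so that $x_n=\rho(a_n)/a_n\to 0$ while $a_nx_n\to\infty$, keeping the integration range inside the von Mises neighborhood and giving $g(a_n)\le g(\rho(a_n))$), plus the routine largeness conditions such as $g(a_n)<\alpha/2$ used to control $\sup_{x\ge 1}x^{2g-\alpha}\log x$; these are straightforward and mirror the checks the paper performs from the explicit form of $\rho^{\leftarrow}$.
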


A proof of the above theorem is sketched here. Let us set $d_n (x):= |F^{\cup \hspace{-.55em}\lor n}(a_n x)- \Phi_\alpha^{\rm boolean} (x)|$. Since both  $F$ and $\Phi_\alpha^{\rm boolean}$ are supported on $[0,\infty)$, it follows immediately that $d_n(x)=0$ for all $x\le 0$. For $x\ge 1$, we show that $d_n(x)$ is estimated by the error between the free max-convolution power of the distribution function $x\mapsto F(a_nx)$ and the free extreme value distribution (see Section \ref{sec:FEV} for details on the free max-settings), together with an additional error term of order $n^{-1}$. Using a previous result in \cite{KU25}, we obtain 
$$
\sup_{x\ge 1}d_n(x) \le O(g(a_n)\lor n^{-1}), 
$$
as shown in Lemma \ref{lem:Estimate:freemax} and Proposition \ref{prop:Estimate:booleanmax}. 

Next, we consider the case $0<x<1$. Here, the semigroup isomorphism $\calX$ plays a crucial role. By definition of $\calX$, we have $|h_{\alpha,\calX(F)}|\le g$ on the neighborhood of $\infty$, where the function $h_{\alpha,F}$ is defined in \eqref{eq:F_h}. Then, by (2.45) in \cite{R}, which is a classical extreme value theory estimate, we have
$$
\Phi_{\alpha+g(a_n'x)} (x) \le \calX(F)^n (a_n'x)\le \Phi_{\alpha-g(a_n'x)}(x),
$$
where $F(a_n')=\frac{n}{n+1}$. By applying $\calX^{\langle-1\rangle}$ to both sides of the above inequality, and using Lemma~\ref{lem:Unif_Dagum} to uniformly control the distance between Boolean extreme value (Dagum) distributions with different parameters, together with the monotonicity of $g$, we obtain
\begin{align*}
\sup_{0<x<1}|F^{\cup \hspace{-.55em}\lor n}(a_n'x) - \Phi_\alpha^{\rm boolean}(x)| \le \frac{g(a_nx_n)}{e(\alpha-g(a_nx_n))} \lor \Phi_{\alpha-g(a_nx_n)}^{\rm boolean}(x_n),
\end{align*}
for any sequence $\{x_n\}_n\subset (0,1)$ such that $x_n\to 0$ and $a_n x_n\to \infty$ as $n\to \infty$. To optimize this bound, we choose $\{x_n\}_n$ so that the two terms on the right-hand side are of the same order. Intuitively, the function $\rho$ in Theorem~\ref{thm:main} describes the critical scale at which these two contributions balance each other; see the proof of Proposition~\ref{prop:Estimate:booleanmax2} for details. In the classical extreme value theory setting, it is known that a similar balance between terms of comparable order arises, leading to a function analogous to $\rho$; see (2.47) in \cite{R}. Consequently, choosing $\{x_n\}_n$ according to this balancing condition, we obtain
\begin{align}\label{eq:outline}
\sup_{0<x<1}|F^{\cup \hspace{-.55em}\lor n}(a_n'x) - \Phi_\alpha^{\rm boolean}(x)| \le \frac{g(\rho(a_n))}{e(\alpha-g(\rho(a_n)))}.
\end{align}
Finally, for $0<x<1$, we estimate
$$
d_n(x) \le |F^{\cup \hspace{-.55em}\lor n}(a_n' (A_nx)) - \Phi_\alpha^{\rm boolean}(A_n x)| + |\Phi^{\rm boolean}_\alpha(A_nx) -\Phi_\alpha^{\rm boolean}(x)|,
$$ 
where $A_n=a_n/a_n'\in (0,1]$. Since $|\Phi_\alpha^{\rm boolean}(A_nx)-\Phi_\alpha^{\rm boolean}(x)| \le \alpha(A_n^{-1}-1)$ for all $x\in (0,1)$ and the inequality \eqref{eq:outline} holds, we get 
$$
\sup_{0<x<1} d_n(x) \le O\left(g(\rho(a_n))\lor (A_n^{-1}-1)\right).
$$ 

Combining the estimates for all three regions of $x$ yields the main theorem.

In Section \ref{sec:FEV}, we introduce the concepts of max-convolution and extreme value distributions in free probability theory. In Section \ref{sec:ratefree}, we derive a convergence rate toward the free extreme value distribution (see Lemma \ref{lem:Estimate:freemax}) as a partial result contributing to the proof of Theorem \ref{thm:main}. In Section \ref{sec:proof}, we provide the complete proof of Theorem \ref{thm:main}. In Section \ref{sec:application}, we apply Theorem \ref{thm:main} to several distributions that belong to the Boolean max-domain of attraction of $\Phi_\alpha^{\rm boolean}$; see Propositions \ref{ex:appli} and \ref{ex:appli2}.


\section{Preliminaries in free extreme value theory}

\subsection{Free max-convolution and free extreme value distributions}
\label{sec:FEV}

In 2006, Ben Arous and Voiculescu (\cite{BV06}) established extreme value theory in the framework of free probability theory.  In the years that followed, considerable effort has been directed toward advancing the theory of free extreme values (\cite{BK10},  \cite{BGCD10}, \cite{GN20}, \cite{HU21}, \cite{KU25}, \cite{U22}, \cite{U22-2}, \cite{U21}). We will explain more details as follows.

According to Definition 6.1 in \cite{BV06}, for any distribution functions $F$ and $G$ on $\R$, we define
$$
F \Box\hspace{-.95em}\lor G :=  \max\{F+G-1,0\} \quad \text{and} \quad F^{\Box\hspace{-.58em}\lor  n} := \underbrace{F\Box\hspace{-.95em}\lor  \cdots \Box\hspace{-.95em}\lor  F}_{n\text{ times}}.
$$ The operation $\Box\hspace{-.74em}\lor$ is called the {\it free max-convolution}. By Theorem 6.8 in \cite{BV06}, the limiting distribution of the normalized maximum of free i.i.d non-commutative real random variables is known to be type equivalent to one of the three distributions:
\begin{align*}
\Phi_\alpha^{\rm free}(x) := 
\begin{cases}
 (1-x^{-\alpha})\mathbf{1}_{[1,\infty)}(x), & \alpha>0 \quad \text{(Pareto distribution)};\\
 \{1-(-x)^{-\alpha}\}\mathbf{1}_{[-1,0]}(x) + \mathbf{1}_{(0,\infty)}(x), &  \alpha < 0 \quad \text{(Beta distribution)};\\
 (1-e^{-x})\mathbf{1}_{[0,\infty)}(x) & \alpha=0  \quad \text{(Exponential distribution)}.
 \end{cases}
\end{align*}
The distributions $\Phi_\alpha^{\rm free}$ are called the {\it free extreme value distributions}. Rigorously, for a distribution function $F$ on $\R$, there exist $a_n>0$, $b_n\in \R$ and a non-degenerate distribution function $G$ such that, for any $x\in \R$,
$F^{\Box\hspace{-.57em}\lor n} (a_n x +b_n) \to G(x)$ as $n\to\infty$.
Then the function $G$ is the free extreme value distribution.

\subsection{Rate of convergence for free extreme values}
\label{sec:ratefree}

In this section, we give the rate of convergence toward the free extreme value distribution under the von Mises condition. 
Given $\alpha>0$ and a distribution function $F$ on $\R$ that is differentiable in a neighborhood of $\infty$, we define the function
\begin{align}\label{eq:F_h}
h_{\alpha,F}(x) = \frac{xF'(x)}{F(x)(-\log F(x))}-\alpha.
\end{align}

We can estimate the function $h_{\alpha,F}$ as follows.
\begin{lem}\label{lem:Estimate_h}
Let $\alpha>0$, $F\in \calV_{\alpha}$ and let $g$ be an auxiliary function with \eqref{eq:bound}. Then we obtain
$$
|h_{\alpha,F}(x)|\le g(x) + \alpha \cdot \frac{-\log F(x)-(1-F(x))}{-\log F(x)} = :u(x)
$$
for all $x$ in the neighborhood of $\infty$. Moreover, $u(x)$ is a non-increasing continuous function such that $u(x) \to 0$ as $x\to \infty$.
\end{lem}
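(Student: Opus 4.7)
The plan is to write $h_{\alpha,F}$ as an explicit linear combination of $k_{\alpha,F}$ and a purely distributional error term, and then to use the elementary inequality $-\log t \ge 1-t$ on $(0,1]$ together with the hypothesis $|k_{\alpha,F}| \le g$. From \eqref{eq:F_k} and \eqref{eq:F_h}, the quantities $k_{\alpha,F}(x)+\alpha$ and $h_{\alpha,F}(x)+\alpha$ share the common factor $xF'(x)/F(x)$, which immediately gives
\begin{align*}
h_{\alpha,F}(x)+\alpha = \bigl(k_{\alpha,F}(x)+\alpha\bigr)\cdot \frac{1-F(x)}{-\log F(x)}.
\end{align*}
Rearranging this into
\begin{align*}
h_{\alpha,F}(x) = k_{\alpha,F}(x)\cdot \frac{1-F(x)}{-\log F(x)} - \alpha\cdot \frac{-\log F(x) - (1-F(x))}{-\log F(x)},
\end{align*}
and observing that $-\log t \ge 1-t$ forces the ratio $(1-F(x))/(-\log F(x))$ to lie in $[0,1]$ while the second summand above is non-positive, the triangle inequality together with $|k_{\alpha,F}(x)| \le g(x)$ yields $|h_{\alpha,F}(x)| \le u(x)$.

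For the stated properties of $u$, continuity is clear because $g$ is continuous and $F$ is $C^1$ near infinity with $F(x) \in (0,1)$ there. The decay $u(x) \to 0$ as $x \to \infty$ follows from $g(x) \to 0$ by hypothesis and from $(1-t)/(-\log t) \to 1$ as $t \to 1^{-}$ (by l'H\^opital's rule), evaluated at $t = F(x) \to 1$.

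The monotonicity is the delicate step. After absorbing the non-increasing contribution of $g$, the task reduces to showing that $\phi(t) := (1-t)/(-\log t)$ is non-decreasing on $(0,1)$, so that $\phi(F(x))$ is non-decreasing in $x$ and hence $\alpha(1-\phi(F(x)))$ is non-increasing. A direct computation gives
\begin{align*}
\phi'(t) = \frac{\log t + (1-t)/t}{(-\log t)^2},
\end{align*}
whose sign coincides with that of $\psi(t) := \log t + 1/t - 1$. Since $\psi(1)=0$ and $\psi'(t) = (t-1)/t^2 < 0$ on $(0,1)$, one has $\psi(t) > 0$ there, so $\phi'(t) > 0$ and $\phi$ is strictly increasing. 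This final sign check on $\psi$ is the main obstacle; the algebraic reduction to $k_{\alpha,F}$ and the continuity and limit arguments are essentially mechanical.
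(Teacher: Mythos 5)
Your proof is correct and follows essentially the same route as the paper: the same decomposition of $h_{\alpha,F}$ via the common factor $xF'(x)/F(x)$, the triangle inequality with $1-F \le -\log F$, and the monotonicity check on the elementary function (you show $\phi(t)=(1-t)/(-\log t)$ is increasing, which is exactly the paper's claim that $\ell = 1-\phi$ is decreasing). Your sign analysis of $\psi(t)=\log t + 1/t - 1$ is a valid way to settle that step.
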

\begin{proof}
By \eqref{eq:F_k} and \eqref{eq:F_h}, 
\begin{align*}
h_{\alpha,F}(x) = k_{\alpha,F}(x) \frac{1-F(x)}{-\log F(x)} + \alpha\cdot\frac{1-F(x)-(-\log F(x))}{-\log F(x)}.
\end{align*}
Since $1-F(x) \le -\log F(x)$, it follows from the triangle inequality and Definition \ref{def:F_alpha} that
\begin{align*}
|h_{\alpha,F}(x)| 
&\le |k_{\alpha,F}(x)| + \alpha \cdot \frac{-\log F(x)-(1-F(x))}{-\log F(x)} \\
& \le g(x) + \alpha\cdot  \frac{-\log F(x)-(1-F(x))}{-\log F(x)} =: u(x).
\end{align*}
We set 
$$
\ell (u) := \frac{-\log u - (1-u)}{-\log u}, \quad u\in (0,1).
$$ 
One can see that
\begin{itemize}
\item $\ell'(u) = \dfrac{-u\log u- (1-u)}{u(-\log u)^2}<0$ for all $u\in (0,1)$;
\item $\lim_{u\to 0^+}\ell(u)=1$ and $\lim_{u\to 1^-} \ell(u)=0$.
\end{itemize}
Thus the function 
$$
x\mapsto \frac{-\log F(x)-(1-F(x))}{-\log F(x)}
$$
is non-increasing and goes to $0$ as $x\to \infty$, and therefore so is $u(x)$.
\end{proof}

Choose $a_n = F^{\leftarrow} (e^{-\frac{1}{n}})$, where $F^{\leftarrow} (y) := \inf \{x\in \R: F(x) \ge y\}$. Due to Proposition 2.2 in \cite{KU25}, we have $F^{\Box\hspace{-.57em}\lor n}(a_n\cdot )\xrightarrow{w} \Phi_\alpha^{\rm free}$. Finally, we get the following conclusion.

\begin{lem}\label{lem:Estimate:freemax}
Given $\alpha>0$, let us consider $F\in \calV_{\alpha}$ and an auxiliary function $g$ with \eqref{eq:bound}. We define $a_n=F^{\leftarrow} (e^{-\frac{1}{n}})$. Then $F^{\Box\hspace{-.57em}\lor n}(a_n\cdot )\xrightarrow{w} \Phi_\alpha^{\rm free}$ and
\begin{align*}
\sup_{x\ge 1}|F^{\Box\hspace{-.57em}\lor n}(a_nx) - \Phi_\alpha^{\rm free}(x)| \le O(g(a_n)\lor n^{-1})
\end{align*}
for sufficiently large $n$.
\end{lem}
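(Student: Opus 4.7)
The weak convergence $F^{\Box\hspace{-.55em}\lor n}(a_n\cdot) \xrightarrow{w} \Phi_\alpha^{\rm free}$ is already provided by \cite[Proposition 2.2]{KU25}, so what remains is the uniform quantitative bound on $[1,\infty)$. My plan is to exploit the elementary formula $F^{\Box\hspace{-.55em}\lor n}(y) = \max\{1 - n(1 - F(y)),\, 0\}$ and compare it directly with $\Phi_\alpha^{\rm free}(x) = 1 - x^{-\alpha}$. The first step is to observe that for $x \ge 1$ and sufficiently large $n$ the maximum is realized by the first argument: since $F$ is non-decreasing and $F(a_n) = e^{-1/n} \ge 1 - 1/n$, one has $n(1 - F(a_n x)) \le n(1 - F(a_n)) \le 1$. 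Consequently the problem reduces to estimating $\sup_{x\ge 1}|n(1 - F(a_n x)) - x^{-\alpha}|$.

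Next, I would recast the von Mises condition in terms of the tail $R(x) := 1 - F(x)$, obtaining the ODE
$$
-\frac{d\log R(x)}{d\log x} = (1 - R(x))(\alpha + k_{\alpha,F}(x))
$$
on a neighborhood of $\infty$. For $t \ge a_n$ one has $R(t) \le R(a_n) = 1 - e^{-1/n} \le 1/n$ and, by monotonicity of $g$, $|k_{\alpha,F}(t)| \le g(t) \le g(a_n)$; hence the right-hand side is sandwiched between $(1 - 1/n)(\alpha - g(a_n))$ and $\alpha + g(a_n)$ throughout $[a_n, a_n x]$. Integrating from $a_n$ to $a_n x$ yields the two-sided estimate
$$
x^{-\alpha - g(a_n)} \;\le\; \frac{R(a_n x)}{R(a_n)} \;\le\; x^{-(1 - 1/n)(\alpha - g(a_n))}.
$$
Multiplying by $n R(a_n) = n(1 - e^{-1/n}) = 1 + O(1/n)$ and subtracting $x^{-\alpha}$, the problem reduces to bounding $\sup_{x\ge 1}|x^{-\beta} - x^{-\alpha}|$ for exponents $\beta$ satisfying $|\beta - \alpha| = O(g(a_n) \vee n^{-1})$, together with an additive $O(n^{-1})$ contribution coming from the factor $nR(a_n) - 1$.

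The step I expect to be the chief obstacle is making this last supremum uniform in $x \in [1,\infty)$: although both $x^{-\beta}$ and $x^{-\alpha}$ decay to $0$, the perturbation of the exponent interacts non-trivially with arbitrarily large $x$, so one cannot simply bound by a constant times $\log x$. The resolution is a direct calculus computation: for $0 < \beta < \alpha$ the function $x \mapsto x^{-\beta} - x^{-\alpha}$ attains its maximum on $[1,\infty)$ at the interior point $x^\ast = (\alpha/\beta)^{1/(\alpha - \beta)}$, and an elementary expansion shows that this maximum is of order $|\beta - \alpha|$ as $\beta \to \alpha$ (the analogous statement holds when $\beta > \alpha$). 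Plugging this back into the two-sided bound above gives $\sup_{x\ge 1}|n(1 - F(a_n x)) - x^{-\alpha}| = O(g(a_n) \vee n^{-1})$, which is the desired conclusion.
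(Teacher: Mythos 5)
Your proposal is correct, but it follows a genuinely different route from the paper. The paper does not integrate the von Mises relation itself: it writes $|n(1-F(a_nx))-x^{-\alpha}|\le |x^{-\alpha}-n(-\log F(a_nx))|+n\{(-\log F(a_nx))-(1-F(a_nx))\}$, bounds the second term by $nr(e^{-1/n})=O(n^{-1})$ using monotonicity of $r(u)=-\log u-(1-u)$, and controls the first term by passing from $k_{\alpha,F}$ to the function $h_{\alpha,F}$ built from $-\log F$ (Lemma \ref{lem:Estimate_h}) and then invoking the estimate $|x^{-\alpha}-n(-\log F(a_nx))|\le \frac{u(a_n)}{e(\alpha-\epsilon)}$ from the proof of \cite[Theorem 2.4]{KU25}, with $u(a_n)=g(a_n)+\alpha(1-n(1-e^{-1/n}))=O(g(a_n)\lor n^{-1})$. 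You instead work directly with the tail $R=1-F$, integrate the identity $-\,d\log R/d\log x=(1-R)(\alpha+k_{\alpha,F})$ over $[a_n,a_nx]$ to get the sandwich $x^{-(\alpha+g(a_n))}\le R(a_nx)/R(a_n)\le x^{-(1-1/n)(\alpha-g(a_n))}$, and then reduce to the elementary maximization of $x^{-\beta}-x^{-\alpha}$ on $[1,\infty)$, whose supremum is $\frac{|\alpha-\beta|}{\alpha}(\beta/\alpha)^{\beta/(\alpha-\beta)}=O(|\beta-\alpha|)$ — this calculus step is essentially the computation hidden inside the cited result of \cite{KU25}, so your argument recovers the same constant (asymptotically $\frac{1}{e\alpha}$) in a self-contained way, at the cost of redoing by hand what the paper imports, and without needing Lemma \ref{lem:Estimate_h} at all. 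Your preliminary observations (that the max with $0$ in $F^{\Box\hspace{-.55em}\lor n}$ is inactive for $x\ge1$ because $n(1-F(a_n))\le 1$, and that $nR(a_n)=1+O(n^{-1})$ contributes an additive $O(n^{-1})$) are exactly what is needed, with the only caveats being routine: for large $n$ one has $a_n$ inside the neighborhood where $F$ is $C^1$ and $|k_{\alpha,F}|\le g$ holds, $g(a_n)<\alpha$ so the lower exponent is positive, and $F(a_n)=e^{-1/n}$ by continuity of $F$ near $\infty$; you should state these explicitly, but they do not affect the validity of the argument.
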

\begin{proof}
For $x\ge 1$, we have
\begin{align*}
|F^{\Box\hspace{-.57em}\lor n}(a_nx) - \Phi_\alpha^{\rm free}(x)| 
&=|x^{-\alpha}-n(1-F(a_nx))|\\
&\le |x^{-\alpha}- n(-\log F(a_nx))| + n \left\{(-\log F(a_nx))-(1-F(a_nx)) \right\}.
\end{align*}
We define $r(u):=-\log u -(1-u)$ for $u\in (0,1)$. Then $r$ is non-increasing on $(0,1)$, and therefore $r(F(a_nx))\le r(F(a_n))=r(e^{-\frac{1}{n}})$. Hence we get
\begin{align*}
 n \{(-\log F(a_nx))-(1-F(a_nx)) \} &= nr(F(a_nx))\le nr(e^{-\frac{1}{n}})\\
 &= n \{ (-\log e^{-\frac{1}{n}}) -(1-e^{-\frac{1}{n}})\} = O(n^{-1}).
\end{align*}
Since $u(a_n)\to 0$ as $n\to \infty$, for any $\epsilon\in (0,\alpha)$, there exists $n_0\in \N$ such that $|u(a_n)|<\epsilon$ for $n\ge n_0$. For such an $n$, we get
$$
|x^{-\alpha}-n(-\log F(a_nx))| \le \frac{1}{e(\alpha-\epsilon)} u(a_n)
$$
by the proof of Theorem 2.4 in \cite{KU25}. By Lemma \ref{lem:Estimate_h},
\begin{align*}
u(a_n) &= g(a_n) + \alpha (1-n(1-e^{-\frac{1}{n}}))\\
&= g(a_n) + \frac{\alpha}{2n}(1+ O(n^{-1}))  \\
&= O(g(a_n)\lor n^{-1}).
\end{align*}
Hence, the desired result is obtained.
\end{proof}


\section{Proof of main theorem}
\label{sec:proof}

In this section, we prove Theorem \ref{thm:main}. We begin by establishing a partial result toward the main theorem.
\begin{prop}\label{prop:Estimate:booleanmax}
Given $\alpha>0$, let us consider $F\in \calV_{\alpha,+}$ and an auxiliary function $g$ with \eqref{eq:bound}. We define $a_n =F^{\leftarrow}(e^{-\frac{1}{n}})$. Then we obtain
\begin{align*}
\sup_{x\ge 1} |F^{\cup \hspace{-.53em} \lor n} (a_nx)- \Phi_{\alpha}^{\rm boolean}(x)| \le O(g(a_n)\lor n^{-1}).
\end{align*}
\end{prop}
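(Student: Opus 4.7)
The plan is to reduce the Boolean discrepancy to the free one, which has already been controlled by Lemma \ref{lem:Estimate:freemax}. Set $f_n := F(a_n x)$ and $q_n := n(1 - f_n)$. From the defining relation $(F^{\cup \hspace{-.52em} \lor n}(a_n x))^{-1} - 1 = n(f_n^{-1} - 1)$ one obtains the compact identity
$$ 1 - F^{\cup \hspace{-.52em} \lor n}(a_n x) = \frac{q_n}{f_n + q_n} = \phi\!\left(\frac{q_n}{f_n}\right), $$
where $\phi(y) := y/(1+y)$ is $1$-Lipschitz on $[0,\infty)$. For $x \ge 1$ and sufficiently large $n$, monotonicity yields $f_n \ge F(a_n) = e^{-1/n} \ge 1 - 1/n$, so the truncation in the definition of the free max-convolution does not activate: $F^{\Box\hspace{-.55em}\lor n}(a_n x) = n f_n - (n-1) = 1 - q_n$. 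Combined with the identities $1 - \Phi_\alpha^{\rm boolean}(x) = \phi(x^{-\alpha})$ and $1 - \Phi_\alpha^{\rm free}(x) = x^{-\alpha}$ valid for $x \ge 1$, this makes the bridge between the Boolean and free quantities explicit.

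Next, I would apply the $1$-Lipschitz bound for $\phi$ and insert the intermediate term $q_n$:
$$ |F^{\cup \hspace{-.52em} \lor n}(a_n x) - \Phi_\alpha^{\rm boolean}(x)| \le \left| \frac{q_n}{f_n} - x^{-\alpha} \right| \le \frac{q_n (1 - f_n)}{f_n} + |F^{\Box\hspace{-.55em}\lor n}(a_n x) - \Phi_\alpha^{\rm free}(x)|. $$
The second term on the right is exactly the object bounded by Lemma \ref{lem:Estimate:freemax}, contributing $O(g(a_n) \lor n^{-1})$. For the first term, uniformly in $x \ge 1$ one has $q_n = n(1 - f_n) \le n(1 - e^{-1/n}) \le 1$ and $f_n \ge e^{-1/n} \ge 1/2$ for large $n$; since $1 - f_n = q_n/n$, the first term is bounded by $q_n^2/(n f_n) \le 2/n = O(n^{-1})$. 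Adding the two pieces yields the desired uniform bound $O(g(a_n) \lor n^{-1})$ on $x \ge 1$.

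The only delicate point I anticipate is ensuring that the reduction is valid \emph{uniformly} in $x \ge 1$: one must check that the max-truncation in $F^{\Box\hspace{-.55em}\lor n}$ never activates in this range and that $f_n$ stays bounded away from $0$. Both follow from the monotonicity $F(a_n x) \ge F(a_n) = e^{-1/n}$ together with the elementary inequality $e^{-1/n} \ge 1 - 1/n$. Beyond that, the argument is essentially algebraic together with a $1$-Lipschitz estimate, with the core analytic work (the rate for the free max-convolution) already carried out in Lemma \ref{lem:Estimate:freemax}.
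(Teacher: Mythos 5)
Your proposal is correct and follows essentially the same route as the paper: both arguments reduce the Boolean discrepancy on $x\ge 1$ to the free one controlled by Lemma \ref{lem:Estimate:freemax} plus an extra $O(n^{-1})$ term coming from $1-F(a_nx)\le 1-e^{-1/n}$, using $F(a_nx)\ge F(a_n)$ and the fact that the truncation in $F^{\Box\hspace{-.55em}\lor n}$ is inactive there. The only difference is cosmetic: you pass through the $1$-Lipschitz map $\phi(y)=y/(1+y)$ and the intermediate quantity $q_n$, whereas the paper cross-multiplies and bounds the denominators below by $1$.
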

\begin{proof}
For $x\ge 1$, we get
\begin{align*}
|F^{\cup \hspace{-.53em} \lor n} (a_nx)- \Phi_{\alpha}^{\rm boolean}(x)|
&=\left|\frac{F(a_nx)}{n-(n-1)F(a_nx)}-\frac{1}{1+x^{-\alpha}} \right|\\
&\le | F(a_nx)(1+x^{-\alpha}) - n + (n-1)F(a_nx)|\\
& \le |nF(a_nx)-n+x^{-\alpha}| + x^{-\alpha}(1-F(a_nx))\\
&=|F^{\Box\hspace{-.57em}\lor n}(a_nx) - \Phi_\alpha^{\rm free}(x)|  +x^{-\alpha}(1-F(a_nx))\\
& \le O(g(a_n)\lor n^{-1}) + 1-F(a_n)\\
& =  O(g(a_n)\lor n^{-1}),
\end{align*}
where the fifth inequality holds by Lemma \ref{lem:Estimate:freemax}.
\end{proof}

Notice that, for any $F\in \calV_{\alpha,+}$, we obtain
\begin{align*}
h_{\alpha,\calX(F)}(x) &= \frac{\calX(F)'(x)}{\calX(F)(x)(-\log \calX(F(x))}-\alpha \\
&= \frac{F'(x)}{F(x)(1-F(x))}-\alpha \\
&= k_{\alpha,F}(x).
\end{align*}
Thus, by assumption \eqref{condi2} of Definition~\ref{def:F_alpha}, we have $|h_{\alpha,\calX(F)}|\le g$ in a neighborhood of $\infty$. If we choose $a_n'>0$ such that $\calX(F)(a_n') = e^{-\frac{1}{n}}$ (equivalently, $F(a_n')=\frac{n}{n+1}$), then $\calX(F)^n(a_n'\cdot) \xrightarrow{w} \Phi_{\alpha}$; see Page 107 in \cite{R}. Moreover, it was known that, for any $x<1$,
$$
\Phi_{\alpha+ g(a_n'x)}(x) \le \calX(F)^n(a_n'x )  \le \Phi_{\alpha - g(a_n'x)}(x),
$$
due to (2.45) in \cite{R}, which is a standard estimate from classical extreme value theory. Since $\calX^{\langle-1\rangle}$ is strictly increasing on $[0,1]$, we get
\begin{align}\label{eq:keyinequality}
\Phi_{\alpha+ g(a_n'x)}^{\rm boolean}(x)  \le F^{\cup \hspace{-.53em}\lor n}(a_n'x) \le \Phi_{\alpha - g(a_n'x)}^{\rm boolean}(x).
\end{align}

To estimate the distance between $F^{\cup \hspace{-.53em}\lor n}(a_n'x)$ and $\Phi_\alpha^{\rm boolean}$, we derive the following inequality for the Dagum distributions.

\begin{lem}\label{lem:Unif_Dagum}
For any $\alpha_1,\alpha_2>0$, we have 
$$\sup_{0<x<1} |\Phi_{\alpha_1}^{\rm boolean}(x)-\Phi_{\alpha_2}^{\rm boolean}(x)| \le  e^{-1} \ \dfrac{|\alpha_2-\alpha_1|}{\alpha_1 \land \alpha_2}.$$
\end{lem}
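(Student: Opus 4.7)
The plan is to reduce the problem to a one-variable optimization by exploiting the simple algebraic form of the Dagum distribution. Writing $\Phi_\alpha^{\rm boolean}(x) = x^\alpha/(1+x^\alpha)$ and putting the difference over a common denominator gives
$$\Phi_{\alpha_1}^{\rm boolean}(x) - \Phi_{\alpha_2}^{\rm boolean}(x) = \frac{x^{\alpha_1} - x^{\alpha_2}}{(1+x^{\alpha_1})(1+x^{\alpha_2})}.$$
I would then assume without loss of generality that $\alpha_1 \le \alpha_2$. Since $x \in (0,1)$ implies $x^{\alpha_1} \ge x^{\alpha_2}$, and since $(1+x^{\alpha_1})(1+x^{\alpha_2}) \ge 1$, the problem reduces to bounding the numerator $x^{\alpha_1} - x^{\alpha_2}$ from above on $(0,1)$.

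Next, I would express the numerator as an integral,
$$x^{\alpha_1} - x^{\alpha_2} = \int_{\alpha_1}^{\alpha_2} x^\alpha (-\log x)\, d\alpha,$$
and use that $\alpha \mapsto x^\alpha$ is decreasing on $(0,1)$ to majorize the integrand by $x^{\alpha_1}(-\log x)$. Combined with the previous paragraph this yields
$$|\Phi_{\alpha_1}^{\rm boolean}(x) - \Phi_{\alpha_2}^{\rm boolean}(x)| \le (\alpha_2 - \alpha_1)\, x^{\alpha_1}(-\log x)$$
uniformly on $(0,1)$. The final step is the elementary maximization $\sup_{x \in (0,1)} x^{\alpha_1}(-\log x) = (e\alpha_1)^{-1}$, attained at $x = e^{-1/\alpha_1}$, which delivers exactly the stated constant $e^{-1}$ and the denominator $\alpha_1 \wedge \alpha_2 = \alpha_1$.

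No step in this route is genuinely difficult; the main decision is choosing the right path at the start. The alternative of differentiating directly in $\alpha$ leads to bounding $y e^{\alpha y}/(1+e^{\alpha y})^2$ with $y = -\log x$, whose supremum is attained at an implicitly defined critical point and complicates the argument; the algebraic route above is cleaner and yields precisely the constant $e^{-1}$ needed for the downstream estimates in the paper.
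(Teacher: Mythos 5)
Your proof is correct and follows essentially the same route as the paper: both arguments bound the $\alpha$-variation of $\Phi_\alpha^{\rm boolean}(x)$ by $x^{\alpha_1}(-\log x)$ (you via the common-denominator identity and the integral $\int_{\alpha_1}^{\alpha_2} x^\alpha(-\log x)\,d\alpha$, the paper via the mean value theorem in the parameter, discarding the factor $(1+x^\gamma)^{-2}\le 1$), and both finish with $\sup_{0<x<1}(-x^{\alpha_1}\log x)=(e\alpha_1)^{-1}$. Incidentally, the "direct differentiation" alternative you dismiss as awkward is exactly what the paper does, and it stays simple because the denominator is just bounded below by $1$ rather than optimized.
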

\begin{proof}
Without loss of generality, we may assume that $\alpha_1 <\alpha_2$. Note that 
$$
\frac{\partial}{\partial\beta} \frac{1}{1+x^{-\beta}} = \frac{x^\beta \log x}{(1+x^\beta)^2}
$$ 
for $\beta>0$ and $x\in (0,1)$. By the mean value theorem, for each $x\in (0,1)$, there exists $\gamma \in (\alpha_1,\alpha_2)$ such that
$$
|\Phi_{\alpha_1}^{\rm boolean}(x)-\Phi_{\alpha_2}^{\rm boolean}(x)| = \left|\frac{x^\gamma \log x}{(1+x^\beta)^2} \right| (\alpha_2-\alpha_1).
$$
Therefore, we get
\begin{align*}
\sup_{0<x<1} |\Phi_{\alpha_1}^{\rm boolean}(x)-\Phi_{\alpha_2}^{\rm boolean}(x)| 
&= \sup_{0<x<1} \left|\frac{x^\gamma \log x}{(1+x^\gamma)^2} \right| (\alpha_2-\alpha_1)\\
&\le \sup_{0<x<1} \sup_{\beta \in [\alpha_1, \alpha_2]} (-x^{\beta}\log x) (\alpha_2-\alpha_1)\\
&\le (\alpha_2-\alpha_1) \sup_{0<x<1} (-x^{\alpha_1} \log x)=e^{-1}\ \frac{\alpha_2-\alpha_1}{\alpha_1}.
\end{align*}
\end{proof}

\begin{prop}\label{prop:Estimate:booleanmax2}
Given $\alpha>0$, let us consider $F\in \calV_{\alpha,+}$ and an auxiliary function $g$ with \eqref{eq:bound}. Moreover, we define $a_n=F^{\leftarrow}(e^{-\frac{1}{n}})$ and $a_n'=F^{\leftarrow}(\frac{n}{n+1})$. Then
\begin{align*}
\sup_{0<x<1}|F^{\cup \hspace{-.53em}\lor n}(a_n'x)- \Phi_\alpha^{\rm boolean}(x) | \le \frac{g(\rho(a_n))}{e(\alpha-g(\rho(a_n)))}
\end{align*}
for sufficiently large $n$, where the function $\rho$ is the inverse function of
$$
t\mapsto t\left\{ \frac{\alpha e}{g(t)} - (e+1)\right\}^{\frac{1}{\alpha -g(t)}}.
$$
\end{prop}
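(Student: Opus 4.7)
My approach is to split $(0,1)$ at the threshold $x_0 := \rho(a_n)/a_n'$ and to bound $|F^{\cup \hspace{-.52em}\lor n}(a_n'x) - \Phi_\alpha^{\rm boolean}(x)|$ separately on $[x_0, 1)$ and on $(0, x_0)$. This threshold is chosen so that the estimates from the two sides coincide in order, and the defining equation of $\rho$ in the statement is exactly the algebraic condition that forces this match. Throughout the argument I shall use $a_n \le a_n'$ (immediate from $e^{-1/n} \le n/(n+1)$) together with $g(\rho(a_n)) < \alpha$ and $x_0 \in (0,1)$, both of which hold for sufficiently large $n$ because $g(t) \to 0$ at infinity and the definition of $\rho$ forces $\rho(a_n)/a_n \to 0$.

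On the upper region $[x_0, 1)$, the sandwich \eqref{eq:keyinequality} together with the fact that $\beta \mapsto \Phi_\beta^{\rm boolean}(x)$ is strictly decreasing for $x<1$ places both $F^{\cup \hspace{-.52em}\lor n}(a_n'x)$ and $\Phi_\alpha^{\rm boolean}(x)$ in the interval $[\Phi_{\alpha + g(a_n'x)}^{\rm boolean}(x), \Phi_{\alpha - g(a_n'x)}^{\rm boolean}(x)]$. A pointwise mean value theorem argument on $\beta \mapsto \Phi_\beta^{\rm boolean}(x)$, structurally identical to the one used in the proof of Lemma \ref{lem:Unif_Dagum}, then gives the pointwise estimate $|F^{\cup \hspace{-.52em}\lor n}(a_n'x) - \Phi_\alpha^{\rm boolean}(x)| \le g(a_n'x)/(e(\alpha - g(a_n'x)))$. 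Since $g$ is non-increasing with $a_n'x \ge \rho(a_n)$ and since $g \mapsto g/(\alpha - g)$ is increasing on $(0, \alpha)$, the right-hand side is at most $g(\rho(a_n))/(e(\alpha - g(\rho(a_n))))$, the target bound.

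On the lower region $(0, x_0)$, both quantities are small. By monotonicity of the distribution function, $F^{\cup \hspace{-.52em}\lor n}(a_n'x) \le F^{\cup \hspace{-.52em}\lor n}(\rho(a_n))$, and applying \eqref{eq:keyinequality} at $x = x_0$ (where $a_n'x_0 = \rho(a_n)$) bounds this further by $\Phi_{\alpha - g(\rho(a_n))}^{\rm boolean}(x_0)$; likewise $\Phi_\alpha^{\rm boolean}(x) \le \Phi_\alpha^{\rm boolean}(x_0) \le \Phi_{\alpha - g(\rho(a_n))}^{\rm boolean}(x_0)$ by monotonicity in $x$ and decreasingness in $\beta$. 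The defining equation $(a_n/\rho(a_n))^{\alpha - g(\rho(a_n))} = \alpha e/g(\rho(a_n)) - (e+1)$, combined with $a_n \le a_n'$, gives $1 + (a_n'/\rho(a_n))^{\alpha - g(\rho(a_n))} \ge e(\alpha - g(\rho(a_n)))/g(\rho(a_n))$, and hence $\Phi_{\alpha - g(\rho(a_n))}^{\rm boolean}(x_0) \le g(\rho(a_n))/(e(\alpha - g(\rho(a_n))))$, matching the bound from the upper region.

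The conceptually hardest step is identifying the threshold $x_0 = \rho(a_n)/a_n'$: the upper-region bound worsens as $x$ decreases (because $g(a_n'x)$ grows), while the lower-region bound worsens as $x_0$ increases, and the formula defining $\rho$ is precisely the algebraic condition equating these two rates. Once this threshold is in hand, the remainder of the proof reduces to the mean value estimate and the monotonicity arguments sketched above.
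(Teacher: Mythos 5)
Your proof is correct and follows essentially the same route as the paper: the sandwich \eqref{eq:keyinequality} combined with the mean-value (Lipschitz-in-$\beta$) estimate for the Dagum family on the upper range, and bounding both $F^{\cup \hspace{-.52em}\lor n}(a_n'x)$ and $\Phi_\alpha^{\rm boolean}(x)$ by a small Dagum value below a threshold of size $\rho(a_n)$, with the two bounds matched through the defining formula for $\rho$. The only (cosmetic) difference is that you split at $x_0=\rho(a_n)/a_n'$ and exploit the definition of $\rho$ as an inequality via $a_n\le a_n'$, whereas the paper splits at $x_n=\rho(a_n)/a_n$ and obtains $\rho$ by exactly balancing the two regional bounds.
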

\begin{proof}
The proof is essentially the same as that of Proposition 2.13 in \cite{R}. By the inequality \eqref{eq:keyinequality}, for any $x\in (0,1)$, we have
$$
\Phi_{\alpha+ g(a_n'x)}^{\rm boolean}(x) -\Phi_{\alpha}^{\rm boolean}(x) \le F^{\cup \hspace{-.53em}\lor n}(a_n'x)-\Phi_{\alpha}^{\rm boolean}(x) \le \Phi_{\alpha - g(a_n'x)}^{\rm boolean}(x)-\Phi_{\alpha}^{\rm boolean}(x).
$$
By Lemma \ref{lem:Unif_Dagum}, we obtain
$$
|F^{\cup \hspace{-.53em}\lor n}(a_n'x)- \Phi_\alpha^{\rm boolean}(x) | \le \frac{g(a_n'x)}{e(\alpha-g(a_n'x))}.
$$
Since $F(a_n')=\frac{n}{n+1} \ge e^{-\frac{1}{n}}=F(a_n)$ and $F$ is a distribution function, we have $a_n' x \ge a_nx$ for each $x\in (0,1)$. The monotonicity of $g$ implies that
$$
\frac{g(a_n'x)}{e(\alpha-g(a_n'x))} \le \frac{g(a_nx)}{e(\alpha-g(a_nx))}.
$$
For any $x\in (0,1)$, we have
$$
|F^{\cup \hspace{-.53em}\lor n}(a_n'x)- \Phi_\alpha^{\rm boolean}(x) | \le \frac{g(a_nx)}{e(\alpha-g(a_nx))},
$$
and hence, for any sequence $\{x_n\}_n \subset (0,1)$ with $x_n\to 0$ and $a_nx_n\to \infty$ as $n\to \infty$, we get
$$
\sup_{x_n \le x <1} |F^{\cup \hspace{-.53em}\lor n}(a_n'x)- \Phi_\alpha^{\rm boolean}(x) | \le \frac{g(a_nx_n)}{e(\alpha-g(a_nx_n))}.
$$
For $x\le x_n$, we obtain
\begin{align*}
F^{\cup \hspace{-.53em}\lor n}(a_n'x) 
\le F^{\cup \hspace{-.53em}\lor n}(a_n'x_n)\le \Phi_{\alpha-g(a_n'x_n)}^{\rm boolean}(x_n) \le \Phi_{\alpha-g(a_nx_n)}^{\rm boolean}(x_n)
\end{align*}
and $\Phi_\alpha^{\rm boolean}(x_n) \le \Phi_{\alpha-g(a_nx_n)}^{\rm boolean}(x_n)$. Thus, we get
$$
\sup_{0<x<1}|F^{\cup \hspace{-.53em}\lor n}(a_n'x)- \Phi_\alpha^{\rm boolean}(x) | \le\frac{g(a_nx_n)}{e(\alpha-g(a_nx_n))} \lor \Phi_{\alpha-g(a_nx_n)}^{\rm boolean}(x_n).
$$
The choice of $x_n$ to minimize the right hand side is to satisfy
\begin{align}\label{eq:g=b}
\frac{g(a_nx_n)}{e(\alpha-g(a_nx_n))}=\Phi_{\alpha-g(a_nx_n)}^{\rm boolean}(x_n).
\end{align}
We explain how to choose such a $x_n$ as follows. Let us set 
$$
a(t):=\left(\frac{1}{-\log F}\right)^{\leftarrow}(t) = \inf \left\{u : \frac{1}{-\log F(u)}\ge t \right\}
$$ 
and define a non-decreasing function $\rho(t)$ via the relation $\rho(a(t))=a(t)x(t)$, where $x(t)$ is an unknown decreasing function such that $x(t)\to 0$ and $a(t)x(t) \to \infty$ as $t\to\infty$. We assume that $\rho(t)$ satisfies
$$
\frac{g(\rho(a(t)))}{e(\alpha-g(\rho(a(t))))} = \frac{1}{1+x(t)^{-(\alpha-g(\rho(a(t))))}}.
$$
This implies that, if let $\rho^{\leftarrow}(t)$ be the inverse function of $\rho(t)$ then it should be
\begin{align}\label{eq:rhoinversedef}
\rho^{\leftarrow}(t) = t\left\{ \frac{\alpha e}{g(t)} - (e+1)\right\}^{\frac{1}{\alpha -g(t)}}
\end{align}
for sufficiently large $t$ such that $g(t) < \alpha e(e+1)^{-1}$. Thus, one can see
\begin{align*}
\lim_{t\to \infty} \frac{\rho^{\leftarrow}(t)}{t}=\infty, \quad
\text{and hence} \quad
\lim_{t\to \infty} \frac{\rho (t)}{t}=0.
\end{align*}
Conversely, if we define the function $\rho^\leftarrow(t)$ by \eqref{eq:rhoinversedef} and consider $\rho(t)$ as the inverse function of $\rho^\leftarrow(t)$, then $x(t) = \rho(a(t))/a(t)$ satisfies $x(t)\to 0$ and $a(t)x(t) =\rho(a(t))\to \infty$ as $t\to\infty$. Therefore $x_n: = x(n)$ satisfies \eqref{eq:g=b}, and also we have
\begin{align*}
\sup_{0<x<1}|F^{\cup \hspace{-.53em}\lor n}(a_n'x)- \Phi_\alpha^{\rm boolean}(x) | \le\frac{g(\rho(a_n))}{e(\alpha-g(\rho(a_n)))}.
\end{align*}
\end{proof}

Finally, we prove Theorem \ref{thm:main} as follows.

\begin{proof}
One can easily see that $|F^{\cup \hspace{-.53em}\lor n}(a_n x)- \Phi_\alpha^{\rm boolean} (x)|=0$ for $x\le 0$. Since $a_n' \ge a_n>0$, we have $A_n=a_n/a_n'\in (0,1]$. Furthermore, we get $A_n\to 1$ as $n\to \infty$. Hence, for any $0<x<1$,
\begin{align*}
|F^{\cup \hspace{-.53em}\lor n}(a_n x)&- \Phi_\alpha^{\rm boolean} (x)| \\
&= |F^{\cup \hspace{-.53em}\lor n}(a_n' (A_nx))- \Phi_\alpha^{\rm boolean} (x)|\\
&\le |F^{\cup \hspace{-.53em}\lor n}(a_n' (A_nx))- \Phi_\alpha^{\rm boolean} (A_nx)| + |\Phi_\alpha^{\rm boolean} (A_nx)-\Phi_\alpha^{\rm boolean} (x)|\\
&\le \frac{g(\rho(a_n))}{e(\alpha-g(\rho(a_n)))} +  |\Phi_\alpha^{\rm boolean} (A_nx)-\Phi_\alpha^{\rm boolean} (x)|
\end{align*}
by Proposition \ref{prop:Estimate:booleanmax2}. Since $\rho(a_n)\to \infty$, for any $0<\epsilon<\alpha$, there exists $n_0 \in \N$ such that $|g(\rho(a_n))|<\epsilon$ for $n\ge n_0$. Hence
$$
\frac{g(\rho(a_n))}{e(\alpha-g(\rho(a_n)))} = O(g(\rho(a_n))).
$$

On the other hand, it is easy to see
\begin{align*}
|\Phi_\alpha^{\rm boolean} (A_nx)-\Phi_\alpha^{\rm boolean} (x)| \le  \alpha \left(A_n^{-1}-1\right), \quad 0<x<1.
\end{align*}
Thus, 
$$
\sup_{0<x<1}|F^{\cup \hspace{-.53em}\lor n}(a_n x)- \Phi_\alpha^{\rm boolean} (x)|  \le O\left( g(\rho(a_n))\lor (A_n^{-1}-1)\right).
$$
Since $\frac{\rho(a_n)}{a_n}\to 0$ as $n\to \infty$, we have $g(\rho(a_n))\ge g(a_n)$ for sufficiently large $n$. Combining Proposition \ref{prop:Estimate:booleanmax} and the above estimates, we get the desired formula.
\end{proof}

\section{Applications of main result}
\label{sec:application}

In this section, we apply Theorem \ref{thm:main} to several distributions that belong to the Boolean max-domain of attraction of $\Phi_\alpha^{\rm boolean}$.

\subsection{Classical Fr\'{e}chet distributions}
Let $\alpha>0$. Recall that $\Phi_\alpha$ denotes the classical Fréchet distribution, that is,
$$
\Phi_\alpha(x)=\exp(-x^{-\alpha})\mathbf{1}_{(0,\infty)}(x).
$$
\begin{prop}\label{ex:appli}
Given $\alpha>0$, let us set $a_n=n^{\frac{1}{\alpha}}$. Then, for sufficiently large $n$, we have
\begin{align}\label{prop:asmpt}
\sup_{x\in \R} |\Phi_\alpha^{\cup \hspace{-.53em}\lor n}(a_n x) - \Phi_\alpha^{\rm boolean}(x)| \le O\left(\frac{1}{\sqrt{n}}\right).
\end{align}
\end{prop}
\begin{proof}
First, we define $a_n'>0$ by $\Phi_\alpha(a_n')=\frac{n}{n+1}$. It is easy to see that $a_n'=\left(\log (1+n^{-1})\right)^{\frac{1}{\alpha}}$. We obtain
$$
A_n= \left\{ n\log\left(1+\frac{1}{n}\right)\right\}^{\frac{1}{\alpha}} = 1 -\frac{1}{2\alpha n} + O\left(\frac{1}{n^2}\right),
$$
and therefore
$$
A_n^{-1}-1 = \frac{1}{2\alpha n}\left(1+O\left(\frac{1}{n}\right)\right), \qquad n\to \infty.
$$

Next, we compute the function $k_{\alpha, \Phi_\alpha}$ and define an auxiliary function $g$ with \eqref{eq:bound} as follows:
\begin{align*}
k_{\alpha, \Phi_\alpha}(x) = \alpha \left\{ \frac{x^{-\alpha}}{1-\exp(-x^{-\alpha})}-1 \right\} \quad \text{and} \quad g(x) :=\frac{\alpha}{x^\alpha-1}, \quad \text{respectively}.
\end{align*}

In this case, for sufficiently large $t$, we can compute the asymptotic expansion for the function
\begin{align}\label{eqrhoinverse}
t \left\{\frac{\alpha e}{g(t)} - (e+1) \right\}^{\frac{1}{\alpha-g(t)}} = t \{et^\alpha- (2e+1)\}^{\frac{t^\alpha-1}{\alpha t^\alpha -2\alpha}}.
\end{align}
One can see that
$$
\frac{t^\alpha-1}{\alpha t^\alpha -2\alpha} = \frac{1}{\alpha} (1+O(t^{-\alpha})), \qquad t \to \infty.
$$
Moreover, we get
\begin{align*}
\log\{et^\alpha- (2e+1) \} = 1+ \alpha \log t + O(t^{-\alpha}), \qquad t \to \infty.
\end{align*}
Therefore, 
\begin{align*}
\{et^\alpha &- (2e+1)\}^{\frac{t^\alpha-1}{\alpha t^\alpha -2\alpha}} \\
&= \exp\left\{\frac{1}{\alpha}\left(1+O(t^{-\alpha}) \right)\left(1+ \alpha \log t + O(t^{-\alpha}) \right) \right\}\\
&= \exp\left\{\frac{1}{\alpha} \left( 1+ \alpha \log t + O(t^{-\alpha}\log t) \right) \right\}\\
&=e^{\frac{1}{\alpha}}t (1+O( t^{-\alpha} \log t )),
\end{align*}
as $t \to \infty$. Thus, we can conclude that
\begin{align}\label{eq:asympt_rhoinverse}
\eqref{eqrhoinverse} =e^{\frac{1}{\alpha}}t^2 (1+O( t^{-\alpha} \log t )), \quad t\to \infty.
\end{align}

Note that, the inverse function $\rho$ of \eqref{eqrhoinverse} exists for sufficiently large $t$, and the asymptotic expansion for $\rho$ is given by
\begin{align*}
\rho (x) = e^{-\frac{1}{2\alpha}}x^{\frac{1}{2}} (1+O(x^{-\frac{\alpha}{2}} \log x)), \qquad x\to \infty,
\end{align*}
due to the formula \eqref{eq:asympt_rhoinverse}. The above formula implies that
$$
\rho (a_n) = e^{-\frac{1}{2\alpha}}n^{\frac{1}{2\alpha}} (1+ O (n^{-\frac{1}{2}}\log n)).
$$
Finally, we get
\begin{align*}
g(\rho(a_n)) = \frac{\alpha}{\rho(a_n)^\alpha -1} =\alpha e^{\frac{1}{2}} n^{-\frac{1}{2}} (1+ O(n^{-\frac{1}{2}}\log n)).
\end{align*}

Since
$$
g(\rho (a_n)) \lor n^{-1} \lor (A_n^{-1}-1) = g(\rho(a_n)) = \alpha e^{\frac{1}{2}} n^{-\frac{1}{2}} (1+ O(n^{-\frac{1}{2}}\log n))
$$
for sufficiently large $n$, the desired result \eqref{prop:asmpt} is obtained, due to Theorem \ref{thm:main}.
\end{proof}

\begin{rem}
Our main result is useful for deriving convergence rates of Boolean max-convolution in a fairly general setting. However, it does not address the optimality of these rates. In fact, the estimate given in Proposition \ref{ex:appli} is not optimal. Indeed, a direct computation shows that
\begin{align*}
\sup_{x\in \R} |\Phi_\alpha^{\cup \hspace{-.53em}\lor n}(a_n x) - \Phi_\alpha^{\rm boolean}(x)| = O\left(\frac{1}{n}\right).
\end{align*}
\end{rem}

\subsection{Inverse gamma distributions}

In this section, we consider inverse gamma distributions. For $\alpha>0$, define the upper and lower incomplete gamma functions by
$$
\Gamma(\alpha,z) = \int_z^\infty t^{\alpha-1}e^{-t}dt \quad \text{and} \quad \gamma(\alpha,z) = \int_0^z t^{\alpha-1}e^{-t}dt,
$$
for all $z>0$. Note that, $\gamma(\alpha,z) = \Gamma(\alpha)- \Gamma(\alpha,z)$ for all $z>0$. Let $G_\alpha$ be the distribution function of the {\it inverse gamma distribution} defined by
$$
G_\alpha(x) := \frac{\Gamma(\alpha,1/x)}{\Gamma(\alpha)}, \qquad x>0.
$$
Furthermore, let $a_n>0$ be defined by $G_\alpha(a_n) = e^{-\frac{1}{n}}$ for each $n\in \mathbb{N}$. Although $a_n$ cannot be expressed explicitly, its asymptotic behavior as $n\to\infty$ can be derived. Since
$$
\gamma(\alpha, z) = \frac{z^\alpha}{\alpha} (1+ O(z)), \qquad z\to 0^+,
$$
we obtain
\begin{align}\label{eq:1-G_a}
1-G_\alpha(x) = \frac{\gamma(\alpha,1/x)}{\Gamma(\alpha)} = \frac{x^{-\alpha}}{\Gamma(\alpha+1)}(1+O(x^{-1})), \quad x\to\infty.
\end{align}
If we put $u=1-G_\alpha(x)$, then \eqref{eq:1-G_a} implies that
\begin{align}\label{eq:x-u}
x = \left( \frac{1}{\Gamma(\alpha+1)u}\right)^{\frac{1}{\alpha}} (1+ O(u^{\frac{1}{\alpha}})).
\end{align}
Define 
$$
u_n: = 1-G_\alpha(a_n) = 1-e^{-\frac{1}{n}} = \frac{1}{n}(1+ O (n^{-1})).
$$
Substituting $x=a_n$ (i.e. $u=u_n$) into \eqref{eq:x-u}, we obtain
\begin{align*}
a_n &= \left( \frac{1}{\Gamma(\alpha+1)u_n}\right)^{\frac{1}{\alpha}} (1+ O(u_n^{\frac{1}{\alpha}})) \\
&=\left(\frac{n}{\Gamma(\alpha+1)} \right)^{\frac{1}{\alpha}} (1+O(n^{-1}))^{-\frac{1}{\alpha}} ( 1+ O(n^{-\frac{1}{\alpha}}))\\
&=\left(\frac{n}{\Gamma(\alpha+1)} \right)^{\frac{1}{\alpha}} (1+ O(n^{-(1\land \frac{1}{\alpha})})).
\end{align*}

\begin{prop}\label{ex:appli2}
Let $\alpha>0$. Then, for sufficiently large $n$, we have
\begin{align}\label{prop:invgamma}
\sup_{x\in \R} |G_\alpha^{\cup \hspace{-.53em}\lor n}(a_n x) - \Phi_\alpha^{\rm boolean}(x)| \le O\left(n^{-( \frac{1}{2} \land \frac{1}{1+\alpha})} \right).
\end{align}
\end{prop}
\begin{proof}
Define $a_n'>0$ by $G_\alpha (a_n') = \frac{n}{n+1}$ for each $n\in \N$. A similar argument as above yields
$$
a_n'= \left(\frac{n+1}{\Gamma(\alpha+1)} \right)^{\frac{1}{\alpha}} + O\left(n^{-\frac{1}{\alpha}} \right).
$$
Hence,
\begin{align}\label{eq:An-1}
A_n^{-1}-1 = \frac{a_n'}{a_n}-1 = O\left( n^{-(1\land \frac{1}{\alpha})} \right), \qquad n\to \infty.
\end{align}

Next, we construct an auxiliary function $g$ satisfying \eqref{eq:bound} in the case $F=G_\alpha$. It is easy to verify that, for all $z>0$,
$$
\alpha e^{-z} \le \frac{z^\alpha e^{-z}}{\gamma(\alpha,z)} \le \alpha,
$$
and hence
\begin{align}\label{eq:Gamma1}
\left|\frac{z^\alpha e^{-z}}{\gamma(\alpha, z)} -\alpha  \right| \le \alpha(1-e^{-z}) \le \alpha z.
\end{align}
Moreover,
\begin{align}\label{eq:Gamma2}
1-G_\alpha (x)= \frac{\gamma(\alpha,1/x)}{\Gamma(\alpha)} \le \frac{1}{\alpha \Gamma(\alpha)} x^{-\alpha}, \qquad x>0.
\end{align}
Since
\begin{align*}
k_{\alpha, G_\alpha}(x)=\frac{1}{G_\alpha(x)} \left\{ \left(\frac{x^{-\alpha} e^{-1/x}}{\gamma(\alpha, 1/x)} -\alpha \right)+ \alpha  (1-G_\alpha(x))\right\},
\end{align*}
it follows from \eqref{eq:Gamma1} and \eqref{eq:Gamma2} that
\begin{align*}
|k_{\alpha,G_\alpha}(x)| \le \frac{1}{G_\alpha(x)} \left( \frac{\alpha}{x} + \frac{1}{\Gamma(\alpha)}\cdot \frac{1}{x^\alpha} \right), \qquad x>0.
\end{align*}
Therefore, for sufficiently large $x$, there exists a constant $M>0$ such that
\begin{align}\label{eq:g_gamma}
|k_{\alpha,G_\alpha}(x) | \le M \left(\frac{1}{x} + \frac{1}{x^\alpha}\right)=:g(x).
\end{align}

In the following, we distinguish the cases $\alpha\neq 1$ and $\alpha=1$.
\begin{enumerate}
\item[\rm (i)] Let $\alpha\neq 1$.
For sufficiently large $t$, we derive an asymptotic expansion of the function
\begin{align}\label{eq:Gamma_rho}
t\left\{ \frac{\alpha e}{g(t)}- (e+1) \right\}^{\frac{1}{\alpha-g(t)}} = t \left\{ \frac{\alpha e }{M(t^{-1}+t^{-\alpha})} - (e +1) \right\}^{\frac{1}{\alpha - M(t^{-1}+t^{-\alpha})}}.
\end{align}
Observe that
$$
\frac{1}{\alpha - M(t^{-1}+t^{-\alpha})} = \frac{1}{\alpha}\left(1 + O(t^{-(1\land \alpha)})\right), \qquad t\to \infty.
$$
Moreover, 
\begin{align*}
\log & \left\{ \frac{\alpha e }{M(t^{-1}+t^{-\alpha})}  - (e +1) \right\}\\
& =  1+\log \frac{\alpha}{M} + (1\land \alpha) \log t + O\left( t^{-(1\land \alpha \land |1-\alpha|)}\right), \qquad t\to\infty.
\end{align*}
Combining these estimates, we obtain
\begin{align*}
&\left\{ \frac{\alpha e }{M(t^{-1}+t^{-\alpha})} - (e +1) \right\}^{\frac{1}{\alpha - M(t^{-1}+t^{-\alpha})}}\\
&\ = \exp\left\{ \frac{1}{\alpha} \left(1+\log \frac{\alpha}{M} + (1\land \alpha) \log t + O\left( t^{-(1\land \alpha \land |1-\alpha|)}\right)\right) \left(1 + O(t^{-(1\land \alpha)})\right) \right\}\\
&\ =\exp\left\{ \frac{1}{\alpha} \left( M'+ (1\land \alpha)\log t +O\left( t^{-(1\land \alpha \land |1-\alpha|)} \log t\right) \right) \right\}\\
&\ = e^{\frac{M'}{\alpha}} t^{1\land \frac{1}{\alpha}}\left( 1+ O\left( t^{-(1\land \alpha \land |1-\alpha|)} \log t\right)\right),
\end{align*}
where $M'=1+\log\frac{\alpha}{M}$. Consequently,
\begin{align}\label{eq:Gamma_rho2}
\eqref{eq:Gamma_rho} = e^{\frac{M'}{\alpha}} t^{2\land (1+\frac{1}{\alpha})}\left( 1+ O\left( t^{-(1\land \alpha \land |1-\alpha|)} \log t\right)\right), \qquad t\to \infty.
\end{align}
Note that the inverse function $\rho$ of \eqref{eq:Gamma_rho} exists for sufficiently large $t$. Hence, by \eqref{eq:Gamma_rho2}, we obtain the asymptotic expansion
\begin{align*}
\rho (x) = M'' x^{\frac{1}{2\land (1+1/\alpha)}} \left(1+O\left(x^{-\frac{1\land \alpha \land |1-\alpha|}{2\land (1+1/\alpha)}} \log x \right) \right), \quad x\to\infty,
\end{align*}
where $M''=e^{-\frac{M'}{\alpha(2\land (1+1/\alpha))}}$. This implies that
$$
\rho(a_n)=L n^{\frac{1}{2\alpha \land (1+\alpha)}} \left( 1+ O(n^{-(1\land \alpha)}) +O\left( n^{-\frac{1\land \alpha \land |1-\alpha|}{2\alpha \land (1+\alpha)}} \log n\right)\right).
$$
for some constant $L>0$. In view of \eqref{eq:g_gamma}, we obtain
\begin{align}\label{eq:g(rho)_gamma}
g(\rho(a_n))=O\left(n^{-\frac{1\land \alpha}{2\alpha \land (1+\alpha)}} \right) = O\left(n^{-( \frac{1}{2} \land \frac{1}{1+\alpha})} \right).
\end{align}

It follows from \eqref{eq:An-1}, \eqref{eq:g(rho)_gamma} and Theorem \ref{thm:main} that
$$
g(\rho (a_n)) \lor n^{-1} \lor (A_n^{-1}-1) = O\left(n^{-( \frac{1}{2} \land \frac{1}{1+\alpha})} \right)
$$
for sufficiently large $n$.

\item[\rm (ii)] Let $\alpha=1$. In this case, a similar computation yields
\begin{align*}
\eqref{eq:Gamma_rho} = \frac{e}{2M} t^2 \left( 1+ O\left(t^{-1}\log t \right)\right), \qquad t\to\infty
\end{align*}
and hence
\begin{align*}
\rho(x) = \sqrt{\frac{2M}{e}} x^{\frac{1}{2}} (1+O(x^{-\frac{1}{2}}\log x)), \quad x\to\infty.
\end{align*}
Since $a_n=n(1+O(n^{-1}))$, we obtain
\begin{align*}
\rho(a_n)= \sqrt{\frac{2M}{e}} n^{\frac{1}{2}}(1+ O(n^{-\frac{1}{2}}\log n)).
\end{align*}
This implies that
$$
g(\rho(a_n))=\sqrt{2Me} n^{-\frac{1}{2}} ( 1+O(n^{-\frac{1}{2}}\log n)) = O(n^{-\frac{1}{2}}).
$$
Therefore, $g(\rho (a_n)) \lor n^{-1} \lor (A_n^{-1}-1) = O(n^{-\frac{1}{2}})$ for sufficiently large $n$.
\end{enumerate}

Consequently, the conclusion \eqref{prop:invgamma} holds for all $\alpha>0$.
\end{proof}
\vspace{8mm}

\section*{Acknowledgment}
The author is sincerely grateful to the referee for a careful and thorough reading of the manuscript, as well as for valuable comments and suggestions. The author was supported by JSPS Grant-in-Aid for Young Scientists 22K13925.

\vspace{8mm}

\hspace{-6mm}{\bf Yuki Ueda}\\
Faculty of Education, Department of Mathematics, Hokkaido University of Education, 9 Hokumon-cho, Asahikawa, Hokkaido 070-8621, Japan\\
E-mail: ueda.yuki@a.hokkyodai.ac.jp

\end{document}